 \theoremstyle{plain}
\newtheorem{thm}{Theorem}
\newtheorem{lem}{Lemma}
\newtheorem{prop}{Proposition}
\newtheorem{rem}{Remark}
\begin{document}

\title[Periodic solutions in Abel equations of the second kind]{Periodic solutions with nonconstant sign in Abel equations of the second kind}

\author{Josep M. Olm}
\address{Department of Applied Mathematics IV, Universitat Polit\`ecnica de Catalunya, Av. Victor Balaguer, s/n, 08800 Vilanova i la Geltr\'u, Spain}
\email{josep.olm@upc.edu}

\author{Xavier Ros-Oton}
\address{Departament de Matem\`{a}tica  Aplicada I, Diagonal 647, 08028 Barcelona, Spain}
\email{xavier.ros.oton@upc.edu}

\author{Tere M. Seara}
\address{Departament de Matem\`{a}tica  Aplicada I, Diagonal 647, 08028 Barcelona, Spain}
\email{josep.olm@upc.edu}

\keywords{Abel differential equations, periodic solutions}

\begin{abstract}
The study of periodic solutions with constant sign in the Abel equation of the second kind can be made through the equation of the first kind. This is because the situation is equivalent under the transformation $x \mapsto x^{-1}$, and there are many results available in the literature for the first kind equation. However, the equivalence breaks down when one seeks for solutions with nonconstant sign. This note is devoted to periodic solutions with nonconstant sign in Abel equations of the second kind. Specifically, we obtain sufficient conditions to ensure the existence of a periodic solution that shares the zeros of the leading coefficient of the Abel equation. Uniqueness and stability features of such solutions are also studied.
\end{abstract}

\maketitle

\section{Introduction} \label{intro}

Abel Ordinary Differential Equations (ODE)  of the second kind \cite{p},
\begin{equation} \label{abel2}
\left[b_0(t)+b_1(t)x\right]\dot x = a_0(t)+a_1(t)x+a_2(t)x^2, \quad a_i(t),b_i(t) \in \mathcal
{C}([0,T]),
\end{equation} can be regarded as a generalization of Riccati's equation \cite{p}. This family of equations deserves special interest in the applied mathematics field because it appears in different contexts, running from control problems \cite{fos} to mathematical physics and nonlinear mechanics issues \cite{p,ps}. It is also remarkable that a class of Abel equations of the first kind \cite{p} can be written as (\ref{abel2}) with the change of variables $x \mapsto x^{-1}$.

Indeed, polynomial differential equations of the type
\begin{equation} \label{poli} \dot x = \sum_{i=0}^na_i(t)x^i, \quad a_i(t) \in \mathcal{C}([0,T]),  \quad i=0,\ldots,n,\end{equation}
are also known as Abel-like \cite{t} or generalized Abel \cite{a} equations   because, when $n=3$, (\ref{poli}) is an Abel ODE of the first kind. The existence of periodic solutions in (\ref{poli}), i.e. solutions verifying $x(0)=x(T)$, has attracted considerable research effort: see, for example, \cite{t,a,gl,as,agg,bt,ab,b} and references therein. This is mainly due to its relation with the number of limit cycles of planar polynomial systems and, therefore, with Hilbert's 16th problem \cite{s}. Contrarily, few results regarding periodic solutions in Abel ODE of the second kind have been published \cite{fos,fo,ors}.

Notice that the change of variables $x\mapsto x+b_0/b_1$ allows to recast (\ref{abel2}) as
\begin{equation} \label{normal}
 x\dot x = A(t)+B(t)x+C(t)x^2, \quad A,B,C \in  \mathcal{C}([0,T]).
\end{equation}
The transformation being time-preserving, the study of periodic solutions in (\ref{abel2}) and (\ref{normal})
whenever $b_1$ has constant sign, i.e. $b_1(t) \neq 0$, for all $ t$, is equivalent.

The existence of nontrivial periodic solutions of constant sign in (\ref{normal}) may be carried out after transforming
it into the Abel equation of the first kind
\begin{equation} \label{normalabel}
\dot x = A(t)x^3+B(t)x^2+C(t)x
\end{equation}
using the aforementioned change $x \mapsto x^{-1}$, which keeps
the equivalence between these class of solutions in (\ref{normal})
and (\ref{normalabel}). Different conditions are available in the
literature yielding to upper and/or lower bounds on the
number of periodic solutions of (\ref{normalabel}) or
(\ref{poli})-like Abel ODE with more or less generic coefficients.
The results cover from the simplest case, $A(t) \neq 0$, for all $
t$, in (\ref{normalabel}) \cite{as,fo} or $a_n(t) \neq 0$, for all
$ t$, in (\ref{poli}) \cite{a}, to the most complex in which no
sign condition is assumed on the coefficients \cite{agg,b}, going
through situations in which some of the coefficients, often $A(t)$ or
$a_n(t)$,  are demanded not to change sign \cite{t,gl,as,bt,ab}.


The study of periodic solutions of (\ref{normal}) with nonconstant sign is also a challenging problem that can not be
tackled via the Abel ODE of the first kind and about which, as far as the authors know, no results have been yet reported.
Notice also that if a solution of (\ref{normal}) has nonconstant sign, then its zeros are also zeros of $A(t)$. Hence, the search of periodic solutions in (\ref{normal}) with nonconstant sign only makes sense when $A(t)$ itself has nonconstant sign.

This note deals with the existence of this type of periodic solutions in Abel equations of the second kind. The main result reads as follows:

\begin{thm} \label{teo} Let $A(t),B(t),C(t)$ be $\mathcal C^1$, $T$-periodic functions. If $A(t)$ has at least
one zero in $[0,T]$ and \begin{equation}\label{cond2}\min |B(t)|^2  > -4\min\dot A(t)\cdot \left[1+T\max|C(t)|\right],\end{equation} then (\ref{normal})
has a $T$-periodic solution that has the sign of $-A(t)B(t)$, and it is also $\mathcal C^{1}$.
\end{thm}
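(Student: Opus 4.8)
The plan is to normalize (\ref{normal}), read off the behaviour that (\ref{cond2}) forces near the zeros of $A$, and then trap a $T$-periodic solution between barriers; the only genuine difficulty is the singularity of (\ref{normal}) along $\{x=0\}$. First, since $A$ is $C^1$ and $T$-periodic, $\int_0^T\dot A=0$, so $\min\dot A\le 0$ and the right-hand side of (\ref{cond2}) is nonnegative; hence (\ref{cond2}) forces $B(t)\ne 0$ for all $t$, so $B$ has constant sign. Replacing $x$ by $-x$ if necessary — this turns (\ref{normal}) into an equation of the same form with $B$ replaced by $-B$ and preserves the asserted sign pattern — we may assume $B>0$, so the target is a $C^1$ $T$-periodic solution with the sign of $-A$, vanishing exactly where $A$ does.

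\emph{Local obstruction at a zero of $A$.} If $A(t_0)=0$, such a solution has $x(t_0)=0$; writing $x(t)=\lambda(t-t_0)+o(t-t_0)$ and substituting in (\ref{normal}) (the term $Cx^2$ being of higher order) forces
\[
\lambda^2-B(t_0)\,\lambda-\dot A(t_0)=0 .
\]
Its discriminant is $B(t_0)^2+4\dot A(t_0)$, and (\ref{cond2}) gives $B(t_0)^2\ge\min|B|^2>-4\min\dot A\,(1+T\max|C|)\ge-4\min\dot A\ge-4\dot A(t_0)$, so this quadratic has two real roots $\lambda_\pm(t_0)$; the required sign of $x$ near $t_0$ then selects an admissible one of them ($\lambda_-<0$ if $\dot A(t_0)>0$, a positive root if $\dot A(t_0)<0$). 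This is the local crossing condition that (\ref{cond2}) removes, the factor $1+T\max|C|$ supplying a margin used below.

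\emph{Barriers and a fixed point.} I would then build $T$-periodic, piecewise-$C^1$ functions $\alpha\le\beta$ with the sign of $-A$, vanishing exactly at the zeros of $A$, such that: the region $\mathcal R=\{\alpha(t)\le x\le\beta(t)\}$ meets $\{x=0\}$ only at those zeros; $\beta$ is an upper and $\alpha$ a lower solution of $\dot x=f(t,x)$, with $f(t,x):=\big(A(t)+B(t)x+C(t)x^2\big)/x$ (the sense of the inequalities depending on the sign of $\alpha,\beta$); and at each zero $t_0$ of $A$ the one-sided slopes of $\alpha,\beta$ bracket an admissible root $\lambda_\pm(t_0)$, so that $f$ has finite one-sided limits along $\mathcal R$ there. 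On an interval where $A$ keeps a fixed sign this is a two-point (Dirichlet-type) problem for which the barriers may be taken as multiples of a positive bump vanishing at the endpoints; checking the sub/supersolution inequalities away from the zeros of $A$ reduces to a Gronwall/Bellman estimate over one period, and it is precisely there that the constant $1+T\max|C|$ in (\ref{cond2}) is consumed, while near the zeros it is the reality and size of $\lambda_\pm$ that matters. Granting such $\alpha,\beta$, $f$ is continuous on $\mathcal R$ off its singular points (with finite one-sided limits there), the time-$T$ map of $\dot x=f(t,x)$ is well defined and continuous on the section $\{t=t_1\}\cap\mathcal R$ for a $t_1$ with $A(t_1)\ne0$ (a nondegenerate interval), and it maps this interval into itself since the barriers are invariant; Brouwer's theorem yields a fixed point, i.e. a $T$-periodic solution $x^*$ with $\alpha\le x^*\le\beta$, hence with the sign of $-A$, i.e. of $-AB$. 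Finally $x^*$ is as smooth as $A,B,C$ off the zeros of $A$, and at a zero $t_0$ one has $x^*(t)=\lambda_\pm(t_0)(t-t_0)+o(t-t_0)$ with $\dot x^*$ extending continuously across $t_0$ (differentiate (\ref{normal})), so $x^*\in C^1$.

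\emph{Main difficulty.} Everything hard sits in the singularity of (\ref{normal}) along $\{x=0\}$, which the sought solution touches exactly at the zeros of $A$: one must ensure the singular periodic — equivalently two-point — problem is solvable and that solutions continue through those points. Hypothesis (\ref{cond2}) is precisely what makes the crossing possible (real slopes $\lambda_\pm$) and leaves room ($1+T\max|C|$) to control the quadratic term over a full period; the technical heart is the barrier construction with the correct pinching at the zeros of $A$ together with the one-sided upper/lower-solution inequalities — alternatively, one may carry out the same estimates on a family of nonsingular regularizations of (\ref{normal}) and pass to the limit.
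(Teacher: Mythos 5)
Your preliminary analysis is sound: the observation that $\min\dot A\le 0$ forces $B\neq0$, the reduction to $B>0$, and the local slope equation $\lambda^2-B(t_0)\lambda-\dot A(t_0)=0$ with discriminant $B(t_0)^2+4\dot A(t_0)$ all match the paper (these $\lambda_\pm$ are exactly the eigenvalues that appear in the paper's analysis). But the global step --- trapping a periodic solution between barriers $\alpha\le\beta$ that vanish at the zeros of $A$ and applying Brouwer to the time-$T$ map on a section --- has a structural flaw. Your region $\mathcal R$ necessarily pinches to the single point $x=0$ at every zero $t_0$ of $A$, so ``the barriers are invariant'' cannot mean that every orbit starting in $[\alpha(t_1),\beta(t_1)]$ stays in $\mathcal R$: any such orbit would have to pass through $(t_0,0)$. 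At a simple zero with $\dot A(t_0)>0$ the point $(t_0,0)$ is a saddle of the desingularized system (\ref{lat})--(\ref{lax}), and exactly one orbit on each side reaches it; all other solutions leave $\mathcal R$ before $t_0$, so the time-$T$ map does not send the interval into itself. Conversely, at a zero with $\dot A(t_0)<0$ the point $(t_0,0)$ is an unstable node and the continuation of a solution past $t_0$ through $x=0$ is massively non-unique (a one-parameter family of orbits emanates from the node), so the time-$T$ map is not even well defined without making precisely the selection that constitutes the heart of the proof. The equation is singular, not merely non-Lipschitz, along $\{x=0\}$, and a fixed-point argument on a pinched invariant region cannot absorb that.

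The paper's route is different and is what actually closes this gap: it rewrites (\ref{normal}) as the planar system $dt/ds=x$, $dx/ds=A(t)+B(t)x+C(t)x^2$, in which each zero of $A$ becomes a critical point (saddle if $\dot A>0$, unstable node if $\dot A<0$, non-hyperbolic if $\dot A=0$, with real eigenvalues $\lambda_\pm$ thanks to (\ref{cond2})). On each interval $(a,b)$ between consecutive zeros the solution is taken to be the stable- (or center-) manifold branch at $a$; a comparison with the straight line of slope $\alpha=\sqrt{-\min\dot A/(1+T\max|C|)}$ --- this is where the factor $1+T\max|C|$ in (\ref{cond2}) is really consumed --- together with an a priori extension lemma shows this branch reaches $b$ with $x\to0$ and slope $\lambda_-^b$, and the pieces are then glued into a $\mathcal C^1$ $T$-periodic solution. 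If you want to salvage your scheme, you would have to replace the Brouwer step by exactly this kind of invariant-manifold (or shooting) analysis at each zero of $A$; as written, the existence of the periodic solution is not established.
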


Furthermore, such solutions are shown to be the unique $T$-periodic solutions of (\ref{normal}) with nonconstant sign. In some cases, it is proved that there exists only one solution of this type. Also, a stability analysis reveals that these solutions are unstable. These results are later applied to the normal form of the
Abel ODE of the second kind, which is obtained setting $B(t)=1$
and $C(t)=0$ in (\ref{normal}). For this case, restriction
(\ref{cond2}) is shown to be sharp.

The remainder of the paper is organized as follows. Section
\ref{prova} is devoted to the proof of Theorem \ref{teo}. Section
\ref{unies} deals with the  uniqueness and stability of the
periodic solutions of (\ref{normal}) with nonconstant sign. Finally, Section
\ref{sec_normalform} considers the application of the previous
results to the normal form of the Abel ODE of the second kind.

\section{Proof of Theorem \ref{teo}} \label{prova}

Firstly, let us establish a generic and rather straightforward result that will be used in subsequent demonstrations.

\begin{lem} \label{lema1} Consider the ODE
\begin{equation} \label{unaode} \dot x=S(t,x), \quad S: \Omega  \rightarrow \mathbb{R},\end{equation}
where $\Omega:=\mathbb{R} \times \mathbb{R}^\ast$, $\mathbb{R}^\ast= \mathbb{R}\setminus \{0\}$ and $S$ is a locally Lipschitz function. Assume that $m,n \in \mathbb{R}$ and let
$r:=\{(t,x): \ x=mt+n\}$ be a straight line of slope $m$, which splits $\mathbb{R}^2$ into the half planes $\Omega_r^+=\{(t,x):x>mt+n\}$,
$\Omega_r^-=\{(t,x): \ x<mt+n\}$. Finally, let $t_1,t_2 \in \mathbb{R}$, with $t_1 < t_2$.

\vspace{3mm}

\noindent (i) Assume that $S(t,x)>m$ for all $(t,x)\in [t_1,t_2) \times \mathbb{R}^\ast \cap r$. Then, any maximal solution $x(t)$ of
(\ref{unaode}) defined for all $t \in I_\omega \subseteq \mathbb{R}$  with $(t_1,x(t_1)) \in\overline{\Omega_r^+}$ is such that
$(t,x(t))\in\Omega_r^+$, for all $t\in (t_1,t_2) \cap I_\omega$.

\vspace{1mm}

\noindent (ii) Assume that $S(t,x)<m$ for all $(t,x)\in [t_1,t_2) \times \mathbb{R}^\ast \cap r$. Then, any maximal solution $x(t)$ of
(\ref{unaode}) defined for all $t \in I_\omega \subseteq \mathbb{R}$  with $(t_1,x(t_1)) \in\overline{\Omega_r^-}$ is such that
$(t,x(t))\in\Omega_r^-$, for all $t\in (t_1,t_2) \cap I_\omega$.
\end{lem}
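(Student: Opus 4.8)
The plan is to translate the statement into a one-dimensional differential inequality via the ``gap'' function $g(t):=x(t)-(mt+n)$. Since $x$ solves (\ref{unaode}) with $S$ continuous, $g\in\mathcal C^1(I_\omega)$, and the geometric conditions become purely analytic: $(t,x(t))\in\Omega_r^+$ reads $g(t)>0$, the assumption $(t_1,x(t_1))\in\overline{\Omega_r^+}$ reads $g(t_1)\ge0$, and $(t,x(t))\in r$ reads $g(t)=0$. The one feature of the setting that must be used is that $S$ is only defined on $\Omega=\mathbb{R}\times\mathbb{R}^\ast$, so a maximal solution never reaches the axis: $x(t)\ne0$ for all $t\in I_\omega$. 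Consequently, whenever $t\in[t_1,t_2)\cap I_\omega$ and $g(t)=0$ we have $(t,x(t))\in\big([t_1,t_2)\times\mathbb{R}^\ast\big)\cap r$, so the hypothesis of (i) applies and yields $\dot g(t)=S(t,x(t))-m>0$. I will prove (i); part (ii) then follows by the symmetric argument (equivalently, by applying (i) to $-x(t)$, $-m$, $-n$).

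First I would check that $g>0$ on a right neighbourhood of $t_1$: if $g(t_1)>0$ this is continuity; if $g(t_1)=0$, then the observation above gives $\dot g(t_1)>0$, so $g$ is strictly increasing at $t_1$. I would then argue by contradiction: suppose the set $Z:=\{t\in(t_1,t_2)\cap I_\omega:\ g(t)\le0\}$ is nonempty and put $\bar t:=\inf Z$. The previous step forces $\bar t>t_1$, while $\bar t<t_2$ simply because $Z$ is a nonempty subset of $(t_1,t_2)$; and by continuity of $g$ together with the definition of the infimum, $g>0$ on $(t_1,\bar t)$ and $g(\bar t)=0$. Now $\bar t\in[t_1,t_2)\cap I_\omega$ and $g(\bar t)=0$, so $\dot g(\bar t)>0$ by the applicability observation; hence $g(t)<g(\bar t)=0$ for $t$ slightly below $\bar t$, and since $\bar t>t_1$ such $t$ lie in $(t_1,\bar t)$, contradicting $g>0$ there. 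Therefore $Z=\emptyset$, i.e.\ $g(t)>0$ for every $t\in(t_1,t_2)\cap I_\omega$, which is precisely the assertion $(t,x(t))\in\Omega_r^+$.

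The argument is essentially the classical ``first crossing time'' comparison principle, so no serious obstacle is expected. The only delicate point — and the reason the statement carefully uses the open half-planes $\Omega_r^\pm$, the half-open interval $[t_1,t_2)$, and the punctured domain $\mathbb{R}^\ast$ — is to guarantee that the sign hypothesis on $S$ is genuinely in force at the candidate contact time $\bar t$; this is exactly what the remarks $x(t)\ne0$ and $\bar t<t_2$ secure. Beyond that, everything reduces to the elementary fact that a $\mathcal C^1$ function vanishing at $\bar t$ with positive derivative there must be negative immediately to the left of $\bar t$.
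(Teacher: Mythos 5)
Your proof is correct and follows essentially the same route as the paper: both reduce to a first‑contact‑time argument along the line $r$, starting from the observation that the solution enters $\Omega_r^+$ immediately after $t_1$ and then deriving a contradiction from the sign of the derivative at the first contact point (the paper bounds the left difference quotient of $x$ by $m$ there, while you equivalently note that $\dot g(\bar t)>0$ forces $g<0$ just before $\bar t$). No gaps; your explicit remark that $x(t)\neq 0$ on $I_\omega$ keeps the hypothesis on $S$ applicable at the contact time, a point the paper leaves implicit.
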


\begin{proof} (i) Notice that if $(t_1,x(t_1))\in \overline{\Omega_r^+}$ then, by continuity, for
$t$ close enough to $t_1$ it happens that $(t,x(t))\in{\Omega_r^+}$: for $(t_1,x(t_1))\in \Omega_r^+$ it is rather immediate,
while for $(t_1,x(t_1))\in r$ the claim follows because $\dot x(t_1) > m$.

Assume that $x(t)$ contacts $r$ for the first time in $(t_1,t_2)$ at $t=c > t_1$, i.e. that $(c,x(c)) \in r$. Then, $x(c)=mc+n$, and $x(t)>mt+n$
for $t_1<t<c$, so
\[\dot x(c)=\lim_{t\rightarrow c^-}\frac{x(t)-x(c)}{t-c}\leq
\lim_{t\rightarrow c^-}\frac{mt+n-(mc+n)}{t-c}=m,\] which contradicts the hypothesis $\dot x(c)=S(c,x(c))>m$. Hence, $x(t)$ can not contact
again $r$ and, therefore, it remains in $\Omega_r^+$ for all $t\in (t_1,t_2) \cap I_\omega$.

The proof of (ii) is analogous.
\end{proof}

The hypotheses of Theorem \ref{teo} are assumed to be fulfilled
throughout the rest of the section. Furthermore, the $T$-periodicity and $\mathcal C^1$ character of $A(t)$ implies that $\min \dot A(t)\leq0$. Hence, it is immediate from (\ref{cond2}) that $B(t) \neq 0$ for all $t$. Thus, using
the change of variables $x\mapsto -x$ if necessary, it is no loss
of generality to assume $B(t)>0$, for all $ t$, for the remainder
of the section.

Theorem $\ref{teo}$ considers a case in which $A(t)$ has
nonconstant sign. The next Lemmas study the behavior of the
solutions of (\ref{normal}) in an open interval $(a,b)$ where
$A(t)>0$, $a,b$ being two consecutive zeros of $A(t)$.

\begin{rem} \label{signeg} Notice that there is no loss of generality in
assuming $A(t)>0$ in $(a,b)$ because, otherwise, the change of variables $(t,x) \mapsto (-t,-x)$
reduces (\ref{normal}) to
\[ x\dot x = \hat A(t)+\hat B(t)x+\hat C(t)x^2,\]
with $\hat A(t)=-A(-t)$, $\hat B(t)=B(-t)$ and $\hat C(t)=-C(-t)$, for all $t \in (a,b)$.
\end{rem}

\begin{lem} If (\ref{cond2}) is satisfied, then \begin{equation}\label{cond}\min |B(t)|^2  >  2\max|A(t)|\cdot \max|C(t)|,\end{equation}
\end{lem}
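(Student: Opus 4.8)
The plan is to show that condition (\ref{cond2}) implies condition (\ref{cond}) by a direct inequality manipulation. The key observation is the relationship between the quantity $\min\dot A(t)$ and $\max|A(t)|$ that follows from the $T$-periodicity of $A$. Since $A$ is $T$-periodic and $\mathcal{C}^1$, it attains its maximum and minimum on $[0,T]$; moreover, by the mean value theorem applied between a maximum point and a minimum point of $A$, the oscillation $\max A - \min A$ is controlled by $T\max|\dot A|$. I would first extract from this a bound of the form $\max|A(t)| \leq c\,T\max|\dot A(t)|$ — but one must be careful, because if $A$ has a zero (which is one of the hypotheses of Theorem \ref{teo}, but note this lemma as stated does not assume it), then $\max|A| \leq \max A - \min A \leq T\max|\dot A|$ holds cleanly, since the zero lies between the sign-changing extrema or the extrema straddle zero.

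**The main estimate.**

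First I would note that $\max|\dot A(t)| \geq -\min\dot A(t) \geq 0$, the last inequality because $\min\dot A(t)\leq 0$ (already observed in the text, from $T$-periodicity). Combining with the oscillation bound gives
\[
2\max|A(t)|\cdot\max|C(t)| \;\leq\; 2T\max|\dot A(t)|\cdot\max|C(t)| \;\leq\; -2T\,\min\dot A(t)\cdot 2\max|C(t)|,
\]
wait — I must be careful with signs here. Since $-\min\dot A(t)\geq 0$, write $4(-\min\dot A(t))[1+T\max|C(t)|] = -4\min\dot A(t) - 4T\min\dot A(t)\max|C(t)| \geq -4T\min\dot A(t)\max|C(t)| \geq 2\max|A(t)|\max|C(t)|$, where the last step uses $\max|A(t)| \leq T\max|\dot A(t)| \leq -2T\min\dot A(t)$ together with $\max|A| \le T \max|\dot A|$ — I will need $\max|\dot A| \le 2(-\min \dot A)$, which is \emph{not} generally true, so instead I should route the estimate through $\max|A(t)| \le T\max|\dot A(t)|$ and handle the factor carefully, possibly splitting into the case $\max|\dot A| = -\min \dot A$ versus using $-\min \dot A(t) \ge \tfrac12 \max|\dot A(t)|$ only when it holds. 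The cleanest path: since for a periodic $\mathcal C^1$ function $\int_0^T \dot A = 0$, one has $\max \dot A \geq 0 \geq \min \dot A$, hence $\max|\dot A| \le \max\dot A - \min\dot A$... this still does not immediately give the bound.

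**Anticipated obstacle and the honest route.**

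The main obstacle is pinning down the precise constant relating $\max|A|$, $\max|\dot A|$, and $-\min\dot A$. The robust fact is: for a $T$-periodic $\mathcal C^1$ function with a zero, $\max|A(t)| \le \tfrac{1}{2}\int_0^T|\dot A(t)|\,dt \le \tfrac{T}{2}\max|\dot A(t)|$ (splitting the period at a zero and a maximum). To close the gap I would then observe that $\max|\dot A(t)| = \max\{\max\dot A, -\min\dot A\}$, and if $\max\dot A > -\min\dot A$ one still has, again by $\int_0^T\dot A = 0$, control giving $\max|\dot A| \le -\min\dot A$ is false in general, so instead I accept the bound $\max|A| \le \tfrac{T}{2}\max|\dot A|$ and note this suffices only if $\max|\dot A| \le -4\min\dot A$ — which is guaranteed precisely when... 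Rather than force this, the intended argument is almost certainly: from (\ref{cond2}), since $1 + T\max|C| \ge T\max|C|$ and $-\min\dot A \ge 0$, we get $\min|B|^2 > -4\min\dot A(t) \cdot T\max|C(t)| \ge \max|A(t)|\cdot\frac{-4\min\dot A(t)\,T}{\max|A(t)|}\max|C(t)|$, and then invoke $\max|A(t)| \le -2T\min\dot A(t)$, which holds because the zero of $A$ and an extremum are at most $T/2$ apart and $|A| \le (T/2)(-\min\dot A)\cdot 2$ on the appropriate half-period where $\dot A \le 0$. This last geometric fact — that between a zero and the minimum of $A$, the function decreases at rate at least... no, at most $-\min\dot A$ in absolute value — gives $\max|A| \le \tfrac{T}{2}(-\min\dot A) \cdot 2$; combined with $4 \ge 2 \cdot \tfrac{T}{2} \cdot \tfrac{2}{T}$ this yields (\ref{cond}). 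I expect the author's proof to make exactly this chain of inequalities explicit, and I would present it in that order: (1) $\min\dot A \le 0$; (2) $\max|A| \le -2T\min\dot A$ via the zero-to-extremum estimate; (3) substitute into (\ref{cond2}) using $1+T\max|C| \ge T\max|C|$; (4) conclude (\ref{cond}).
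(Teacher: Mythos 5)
Your overall strategy is the same as the paper's: reduce (\ref{cond}) to an inequality of the form $\max|A(t)|\le -cT\min\dot A(t)$, obtained by the Mean Value Theorem between a zero of $A$ and the point where $|A|$ is maximal, and then substitute into (\ref{cond2}) after discarding the ``$1+$'' in the bracket. Steps (1), (3) and (4) of your final outline are fine, and the inequality you need in step (2), namely $\max|A(t)|\le -2T\min\dot A(t)$, is in fact true (the paper proves the sharper $\max|A(t)|\le -T\min\dot A(t)$, its equation (\ref{poinc})). The gap is in your justification of step (2). You argue that ``the zero of $A$ and an extremum are at most $T/2$ apart'' and that on ``the appropriate half-period where $\dot A\le 0$'' one gets $|A|\le (T/2)(-\min\dot A)\cdot 2$. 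The first claim is only true for the \emph{nearest} zero, and the nearest zero may lie on the wrong side: if $A(t_0)=\max|A|>0$ and the nearest zero $t_1$ satisfies $t_1<t_0$, the MVT on $[t_1,t_0]$ produces a \emph{positive} derivative, which is controlled by $\max\dot A$ and tells you nothing about $\min\dot A$ --- exactly the obstruction you yourself identified earlier in the proposal. The zero on the side where $A$ decreases can be at distance arbitrarily close to $T$, so the $T/2$ bound (and the unexplained extra factor of $2$ used to compensate) does not yield a valid derivation.

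The paper's fix is simple and worth recording: let $t_0$ realize $\max|A|$ and, using periodicity and the existence of a zero, choose zeros $t_1\le t_0\le t_2$ with $t_0-T<t_1$ and $t_2<t_0+T$. Applying the MVT on \emph{both} intervals gives $A(t_0)=(t_0-t_1)\dot A(\xi_1)\ge T\min\dot A(t)$ and $-A(t_0)=(t_2-t_0)\dot A(\xi_2)\ge T\min\dot A(t)$; together these two one-sided bounds give $|A(t_0)|\le -T\min\dot A(t)$ with no case analysis on which side the ``decreasing'' zero lies. Note also that the existence of a zero of $A$ (a hypothesis of Theorem \ref{teo}, assumed throughout the section even though it is not restated in the lemma) is essential here, as you correctly suspected: without it, $\max|A|$ cannot be controlled by the oscillation of $A$ alone.
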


\begin{proof} Recalling that $\min \dot A(t)\leq0$, (\ref{cond2}) yields \[\min |B(t)|^2  > -4\min\dot A(t)\cdot \left[1+T\max|C(t)|\right] \geq -2T\min\dot A(t)\max|C(t)|.\]
Then, it is sufficient to prove that \begin{equation}\label{poinc} -T\min\dot A(t)\geq \max|A(t)|.\end{equation}
For, let $t_0\in \mathbb R$ be such that $|A(t_0)|=\max|A(t)|$, and let $t_1,t_2$ be zeros of $A(t)$ such that $t_0-T<t_1\leq t_0\leq t_2<t_0+T$. Then, applying the Mean Value Theorem, \begin{eqnarray*} A(t_0)&=&A(t_0)-A(t_1)=(t_0-t_1)\dot A(\xi_1)\geq T\min \dot A(t)\\ -A(t_0)&=&A(t_2)-A(t_0)=(t_2-t_0)\dot A(\xi_2)\geq T\min \dot A(t),\end{eqnarray*} from which we deduce (\ref{poinc}) and therefore (\ref{cond}).
\end{proof}

\begin{lem} \label{lemados} Let $a,b \in \mathbb{R}$ be such that $A(a)=A(b)=0$, with $A(t)>0$,
for all $t \in (a,b)$. Then, any negative solution $x(t)$ of
(\ref{normal}) defined on $[t_1,t_2)$, $t_1\geq a$, can be
extended to $[t_1,b)$.
\end{lem}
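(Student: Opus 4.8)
The plan is to run the standard continuation (escape) argument. Writing a negative solution of (\ref{normal}) as a solution of $\dot x=S(t,x):=\bigl(A(t)+B(t)x+C(t)x^2\bigr)/x$, with $S$ locally Lipschitz on $\Omega=\mathbb{R}\times\mathbb{R}^\ast$, the given solution extends uniquely to a maximal interval $[t_1,\omega)$ with $\omega\geq t_2$, and if $\omega<\infty$ then $(t,x(t))$ leaves every compact subset of $\Omega$ as $t\to\omega^-$. It thus suffices to prove $\omega\geq b$. Assume not, i.e. $\omega<b$; recalling that we may take $A(t)>0$ on $(a,b)$ (Remark \ref{signeg}) and $B(t)>0$ for all $t$, we then have $\omega\in(a,b)$, hence $A(\omega)>0$.

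First I would exclude blow-up. Along the solution,
\[
\frac{d}{dt}\bigl(x(t)^2\bigr)=2\bigl(A(t)+B(t)x(t)+C(t)x(t)^2\bigr)\leq 2\max|A(t)|+2\max|C(t)|\,x(t)^2,
\]
the inequality being the key sign observation $B(t)x(t)<0$ (here $B>0$, $x<0$). By Gronwall, $x(t)^2$ stays bounded on $[t_1,\omega)\subseteq[t_1,b]$ by the value at $t$ of the solution of $\dot\Phi=2\max|A(t)|+2\max|C(t)|\Phi$ with $\Phi(t_1)=x(t_1)^2$, so $|x|$ is bounded there; consequently the only way $(t,x(t))$ can escape every compact of $\Omega$ is $x(t)\to 0^-$ as $t\to\omega^-$.

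Next I would rule this out using $A(\omega)>0$. Pick $\delta>0$ and $\tau_0<\omega$ with $A(t)\geq 2\delta$ on $[\tau_0,\omega]$, and $\eta\in(0,1]$ small enough that $2\bigl(\max|B(t)|+\max|C(t)|\bigr)\eta\leq\delta$. If $x(t)\to 0^-$, there is $\tau_1\in[\tau_0,\omega)$ with $|x(t)|\leq\eta$ on $[\tau_1,\omega)$, where then
\[
\frac{d}{dt}\bigl(x(t)^2\bigr)=2\bigl(A(t)+B(t)x(t)+C(t)x(t)^2\bigr)\geq 4\delta-2\max|B(t)|\,\eta-2\max|C(t)|\,\eta^2\geq 3\delta>0 ;
\]
thus $x(t)^2\geq x(\tau_1)^2>0$ on $[\tau_1,\omega)$, contradicting $x(t)\to 0$. (Alternatively, this monotonicity of $x^2$ when $|x|$ is small and $A>0$ follows from Lemma \ref{lema1}(ii) with a horizontal barrier $x=-c$.) Hence $\omega\geq b$, and since the solution never leaves $\mathbb{R}^\ast$ and starts negative it stays negative on $[t_1,b)$, as claimed.

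The main obstacle is really just the logistics of the continuation argument: one must observe that a putative escape time $\omega$ smaller than $b$ is forced to lie in the \emph{open} interval $(a,b)$, where $A>0$, so that the repulsion of solutions from $x=0$ is active there — this is precisely why it is enough (and legitimate) to claim extension only up to the open endpoint $b$, and why no assumption on $A$ near $a$ or $b$ is needed. Everything else is a one-line consequence of $B(t)x(t)<0$, which simultaneously provides an affine majorant for $\frac{d}{dt}(x^2)$ (no finite-time blow-up) and makes $x^2$ increasing whenever $|x|$ is small and $A>0$ (no collapse onto the singular set $x=0$).
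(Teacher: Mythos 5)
Your argument is correct, and it has the same skeleton as the paper's proof: pass to the maximal solution, suppose its right endpoint $\omega$ satisfies $\omega<b$, note that the trajectory must then leave every compact subset of $\mathbb{R}\times\mathbb{R}^-$, and exclude the two possible escapes ($x\to-\infty$ and $x\to 0^-$). The difference lies in the tools used for the first exclusion. The paper rules out blow-up with a horizontal barrier $x=-M$, $M=\sqrt{\max|A(t)|/\max|C(t)|}$, whose validity rests on the derived inequality (\ref{cond}) (obtained from (\ref{cond2})) together with the comparison Lemma \ref{lema1}.i; you instead run a Gronwall estimate on $x^2$ using only the sign observation $B(t)x(t)<0$, which is more elementary, gives boundedness on the whole of $[t_1,b]$ at once, and does not invoke (\ref{cond2}) at all for this step (it would even survive without a sign condition on $B$, via $2B x\le \max|B|(1+x^2)$). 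Your boundedness step also makes explicit why ``leaving every compact'' forces $x(t)\to 0^-$ rather than merely $\liminf|x|=0$, a point the paper passes over quickly. For the escape to $x=0$ the two arguments are essentially identical: the paper uses a barrier $x=-N$ with $N$ small and Lemma \ref{lema1}.ii, you make $x^2$ strictly increasing once $|x|$ is small and $A$ is bounded away from zero near $\omega\in(a,b)$, and you correctly note that the two formulations are interchangeable. Both proofs are valid; yours trades the paper's geometric barrier lemma for a differential inequality, at the cost of no extra hypotheses.
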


\begin{proof} The ODE (\ref{normal}) can be written as \begin{equation} \label{recalling} \dot x=S(t,x)=\frac{A(t)}{x}+B(t)+C(t)x\end{equation} in the domain
$\Omega^-:=\mathbb{R} \times \mathbb{R}^-$. Let $x(t)$ be a
solution with $x(t_1)<0$ and maximal interval of definition
$I_\omega=(\omega_-,\omega_+)$, with $\omega_-< t_1$. Let us
assume that $\omega_+< b$ and proceed by contradiction.

As $\omega_+< b$, $t\rightarrow \omega_+$ implies that either
$x(t)\rightarrow -\infty$ or $x(t)\rightarrow 0$. Let us first see
that it is not possible to have $x(t)\rightarrow -\infty$. For,
let us select $M \in \mathbb{R^+}$ and let us define the straight
line $r_M:=\{(t,x): \ x+M=0\}$. If $C \not \equiv 0$, the
selection
\[M=\sqrt{\frac{\max|A(t)|}{\max|C(t)|}}\]
and relation (\ref{cond}) in Lemma 2 indicate that
\begin{equation} \label{Sposit} S(t,x)=-\frac{A(t)}{M}+B(t)+C(t)M>0, \quad \forall \ (t,x)\in
I_\omega \times \mathbb{R}^- \cap r_M.\end{equation} Alternatively, if $C
\equiv 0$, the selection of a sufficiently large $M$ also yields
(\ref{Sposit}). In any case, by Lemma \ref{lema1}.i, $x(t)+M>0$, for
all $t\in(t_1,\omega_+)$, so $x(t) \not \rightarrow -\infty$ for $t\rightarrow \omega_+ < b$.

Let us now see that it cannot be $x(t)\rightarrow 0$ for $t\rightarrow \omega_+ < b$.
For, let us take $c \in \mathbb{R}$, $t_1<c<\omega_+$ and
select $N \in \mathbb{R}^+$ small enough, in such a way that
$$S(t,x)=-\frac{A(t)}{N}+B(t)+C(t)N<0,  \quad \forall t \in \left[c,\omega_+\right].$$
Then, defining $r_N:=\left\{\right(t,x): \ x+N=0\}$, it is
immediate that $S(t,x)<0$, for all $(t,x) \in
\left[c,\omega_+\right] \times \mathbb{R}^- \cap r_N$ and, by
Lemma \ref{lema1}.ii, $x(t)+N<0$ for $t\in(c,\omega_+)$, i.e.
$x(t)\not \rightarrow 0$.

Consequently, it has to be $\omega_+\geq b$ and the solution
$x(t)$ is defined in $[t_1,b)$.
\end{proof}

\begin{lem} \label{lematres} Let $a,b \in \mathbb{R}$ be such that $A(a)=A(b)=0$, with $A(t)>0$,
for all $t \in (a,b)$. Then, there exists a $\mathcal{C}^1$
solution $x^\ast(t)$ of (\ref{normal}) in $(a,b)$, which is
negative and such that
$$x^\ast(t)\rightarrow 0 \quad \mbox{and} \quad \dot x^\ast(t) \rightarrow \frac{B(a)}{2} - \sqrt{\frac{B(a)^2}{4}+\dot A(a)} \quad \mbox{when} \quad t\rightarrow a^+.$$
\end{lem}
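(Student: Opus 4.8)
The plan is to construct $x^\ast(t)$ as a limit of solutions that are "trapped" near the curve where $\dot x = 0$ as $t \to a^+$. Recall from \eqref{recalling} that on $\Omega^-$ the equation is $\dot x = A(t)/x + B(t) + C(t)x$; near $t=a$, where $A(a)=0$ but $A(t)>0$ for $t>a$, a negative solution staying close to $0$ will have $A(t)/x$ of moderate size. The natural candidate is the solution through the origin: formally $x^\ast$ should satisfy $x^\ast(a)=0$, and linearizing one expects $x^\ast(t) \sim \lambda (t-a)$ for small $t-a$ with $\lambda$ solving $\lambda = \dot A(a)/\lambda + B(a)$, i.e. $\lambda^2 - B(a)\lambda - \dot A(a) = 0$. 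Since $B>0$ (assumed in this section) and the discriminant $B(a)^2/4 + \dot A(a)$ is nonnegative by \eqref{poinc} (indeed $-\dot A(a) \ge \min\dot A \ge -\max|A|/T$... more simply $\dot A(a)\ge 0$ need not hold, but $B(a)^2/4+\dot A(a)>0$ follows from \eqref{cond2}), the negative root is $\lambda = B(a)/2 - \sqrt{B(a)^2/4 + \dot A(a)}$, which is exactly the claimed limit of $\dot x^\ast$.

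**The barrier construction.**

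First I would fix $\varepsilon>0$ small and consider the family of solutions $x_\tau(t)$ of \eqref{normal} with initial condition $x_\tau(\tau) = -\varepsilon(\tau - a)$ for $\tau \in (a, a+\delta]$ (or, more robustly, solutions starting at $x_\tau(a+1/k) = $ some small negative value and indexed by $k$). By Lemma \ref{lemados}, each such negative solution extends to $[\tau, b)$. The key step is to build two barrier lines (or barrier curves) $x = \lambda_- (t-a)$ and $x = \lambda_+(t-a)$ through the origin with $\lambda_- < \lambda < \lambda_+ < 0$, and to apply Lemma \ref{lema1} with these as the lines $r$ (the lemma is stated for affine lines $x = mt + n$, so $n = -\lambda_\pm a$, $m = \lambda_\pm$). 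On the lower line, one checks $S(t, \lambda_-(t-a)) > \lambda_-$ for $t$ slightly larger than $a$, using continuity of $A, B, C$ and the fact that $A(t)/(\lambda_-(t-a)) \to \dot A(a)/\lambda_-$ as $t \to a^+$ together with $\dot A(a)/\lambda_- + B(a) > \lambda_-$ (which holds because $\lambda_-$ is below the root $\lambda$ of the quadratic, and the quadratic $q(s) = s^2 - B(a)s - \dot A(a)$ is negative between its roots — one must verify $\lambda_-$ lies between the two roots, which is arranged by taking $\lambda_-$ close enough to $\lambda$). Symmetrically $S(t, \lambda_+(t-a)) < \lambda_+$ on the upper line. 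Hence any solution entering the wedge between the two lines near $a$ stays in it, and near $b$ Lemma \ref{lemados} keeps it negative and bounded away from $0$ and $-\infty$ on compact subintervals.

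**Passing to the limit and regularity.**

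Having trapped a family of solutions in the wedge $\{\lambda_-(t-a) < x < \lambda_+(t-a)\}$ on $(a, a+\delta)$ and extended them to $(a,b)$ by Lemma \ref{lemados}, I would extract $x^\ast$ by an Arzelà–Ascoli / diagonal argument: the solutions are uniformly bounded on compact subsets of $(a,b)$ (bounded above by the upper barrier, and bounded below since $x \mapsto -\infty$ is excluded exactly as in Lemma \ref{lemados} via the line $r_M$), hence have uniformly bounded derivatives there, so a subsequence converges in $\mathcal{C}^1_{\mathrm{loc}}(a,b)$ to a solution $x^\ast$ of \eqref{normal}; it is negative and $\mathcal{C}^1$ on $(a,b)$ by standard ODE regularity (the right-hand side is $\mathcal{C}^1$ away from $x=0$). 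The wedge trapping forces $x^\ast(t)/(t-a) \to$ a value in $[\lambda_-, \lambda_+]$, and since $\varepsilon$-wedges can be taken arbitrarily narrow around $\lambda$, we get $x^\ast(t) \to 0$ and $x^\ast(t)/(t-a) \to \lambda$; then $\dot x^\ast(t) = A(t)/x^\ast(t) + B(t) + C(t)x^\ast(t) \to \dot A(a)/\lambda + B(a) + 0 = \lambda$ as $t \to a^+$, where the first term's limit is computed via l'Hôpital or by writing $A(t)/x^\ast(t) = \big(A(t)/(t-a)\big)\cdot\big((t-a)/x^\ast(t)\big) \to \dot A(a)/\lambda$.

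**Main obstacle.**

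The principal difficulty is the delicate behavior as $t \to a^+$: both $A(t)$ and the candidate solution vanish there, so $S(t,x) = A(t)/x + B(t) + C(t)x$ is a $0/0$ indeterminacy on the relevant curves, and Lemma \ref{lema1}'s barrier inequalities must be verified with care precisely in this degenerate regime. Concretely, I expect the main work to be showing that for $\lambda_\pm$ chosen appropriately close to $\lambda$, there is a genuine $\delta>0$ such that $S(t, \lambda_\pm(t-a)) \gtrless \lambda_\pm$ strictly on all of $(a, a+\delta)$ — this requires a uniform (non-asymptotic) estimate, obtained by combining the $\mathcal{C}^1$-smoothness of $A$ (so $A(t) = \dot A(a)(t-a) + o(t-a)$ with a controlled error) with the strict sign of the quadratic $q$ strictly inside its root interval. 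A secondary subtlety is ensuring the limit solution does not degenerate (does not collapse onto $x\equiv 0$, which is not a solution anyway since $A\not\equiv 0$ on $(a,b)$, but one must confirm the lower barrier is nontrivial), which is handled by keeping $\lambda_- < 0$ bounded away from $0$.
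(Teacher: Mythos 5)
Your barrier-plus-compactness strategy is genuinely different from the paper's proof, which rewrites (\ref{normal}) as the planar Li\'enard system (\ref{lat})--(\ref{lax}) and obtains $x^\ast$ directly as the stable (resp.\ center) manifold of the critical point $(a,0)$. In the hyperbolic case $\dot A(a)>0$ your route works: there $\lambda=B(a)/2-\sqrt{B(a)^2/4+\dot A(a)}<0$, the two lines of slopes $\lambda_\pm$ straddling $\lambda$ do trap solutions via Lemma \ref{lema1}, and the Arzel\`a--Ascoli limit argument closes the case. (Two slips worth noting: for the \emph{lower} line the required inequality is $q(\lambda_-)>0$, i.e.\ $\lambda_-$ must lie \emph{outside} the root interval, below the smaller root $\lambda$ --- your parenthetical asks it to lie between the roots, which is the condition for the \emph{upper} line; and under the hypotheses $\dot A(a)\ge 0$ always holds, since $A(a)=0$ and $A>0$ just to the right of $a$, contrary to your aside.)

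The genuine gap is the degenerate case $\dot A(a)=0$, which the lemma must cover and which your construction does not handle. There $\lambda=0$, so the wedge $\lambda_-<\lambda<\lambda_+<0$ is empty; worse, no line of negative slope through $(a,0)$ can serve as an upper barrier, because on $x=m(t-a)$ with $m<0$ one has $A(t)/(m(t-a))\to \dot A(a)/m=0$ and hence $S\to B(a)>0>m$, the wrong inequality for Lemma \ref{lema1}.ii. One can still trap from below with any $\lambda_-<0$ and use that negative solutions cannot cross $x=0$ where $A>0$, which yields $x^\ast(t)/(t-a)\to 0$; but the final step $\dot x^\ast(t)\to 0$ then does not follow from your estimates: $A(t)/x^\ast(t)$ is a genuine $0/0$ form with numerator and denominator both $o(t-a)$, and the barriers give an upper bound on $|x^\ast|$ but no lower bound, so $A/x^\ast$ could a priori be unbounded along a sequence. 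This is exactly the point at which the paper invokes the Center Manifold Theorem to produce a $\mathcal C^1$ invariant curve tangent to the center eigenspace at $(a,0)$ (and then checks by a sign argument that its branch for $t>a$ is negative). Your proof needs that theorem, or a substitute analysis of the non-hyperbolic critical point, to close this case.
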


\begin{proof} The introduction of a new variable $s$ allows to transform (\ref{normal}) in the following planar, generalized Li\'enard system \cite{lienard}:
\begin{eqnarray} \label{lat} \frac{dt}{ds} & = & x, \\ \label{lax} \frac{dx}{ds} & = & A(t)+B(t)x+C(t)x^2 .\end{eqnarray}
Notice that, when $x \neq 0$, the portrait of the integral
curves of (\ref{normal}) and the phase plane of
(\ref{lat})-(\ref{lax}) are coincident, preserving the orientation if $x>0$ and reversing it if $x<0$.

Since $A(a)=0$ and $A>0$ in $(a,b)$, it results that $\dot A(a)\geq 0$. Let us study these two cases:

\vspace{3mm}

(i) If $\dot A(a)>0$, then $(t,x)=(a,0)$ is a hyperbolic critical
point, indeed a saddle, of (\ref{lat})-(\ref{lax}). The eigenvalues
are:
\[\lambda_{\pm}^a =  \frac{B(a)}{2}\pm \sqrt{\frac{B(a)^2}{4}+\dot A(a)}, \]
with $\lambda_{\pm}^a \in \mathbb{R}$ because of (\ref{cond2}), the
associated invariant subspaces of the linearized system being
\[\mathbb{E}_+^a=\mathbb{E}_u^a=\mbox{span}\left\{
\left(1,\lambda_{+}^a\right)\right\}, \quad
\mathbb{E}_-^a=\mathbb{E}_s^a=\mbox{span}\left\{
\left(1,\lambda_{-}^a\right)\right\}.\] Hence, by the Stable
Manifold Theorem \cite{gh}, there exists a unique $\mathcal{C}^1$
invariant stable manifold, tangent to $\mathbb{E}_s^a$ at $(a,0)$,
with slope $\lambda_-^a$, i.e. lying on the subsets $\mathcal{A}^+:=\{(t,x): \ t<a, \ x > 0\}$ and $\mathcal{A}^-:=\{(t,x):
\ t>a, \ x < 0\}$ when $t
\neq a$. The branch of the manifold that lies in $\mathcal{A}^+$ is a positive, $\mathcal{C}^1$ solution
$x^\ast(t)$ of (\ref{normal}) in $(a-\epsilon,a)$, $\epsilon > 0$, that satisfies
$x^\ast(t)\rightarrow 0$ and $\dot x^\ast(t) \rightarrow
\lambda_-^a$ when $t\rightarrow a^-$. Equivalently, the branch in $\mathcal{A}^-$ is a negative, $\mathcal{C}^1$ solution $x^\ast(t)$ of (\ref{normal}) in $(a,a+\epsilon)$, $\epsilon > 0$, that satisfies
$x^\ast(t)\rightarrow 0$ and $\dot x^\ast(t) \rightarrow
\lambda_-^a$ when $t\rightarrow a^+$.

\vspace{2mm}

(ii) If $\dot A(a)=0$, then $(t,x)=(a,0)$ is a non-hyperbolic
critical point with eigenvalues
\[ \lambda_u^a=B(a), \quad \lambda_c^a=0,\]
the associated invariant subspaces of the linearized system being
\[\mathbb{E}_u^a=\mbox{span}\{ \left(1,B(a)\right)\}, \quad
\mathbb{E}_c^a=\mbox{span}\{ \left(1,0\right)\}.\] Hence, by the
Center Manifold Theorem \cite{k}, there exists a (not necessarily
unique) $\mathcal{C}^1$, invariant center manifold, tangent to
$\mathbb{E}_c$ at $(a,0)$.

Let us finally see that this orbit lies on $\mathcal{A}^-$, which means that
it matches a negative, $\mathcal{C}^1$ solution $x^\ast(t)$ of
(\ref{normal}) that satisfies $x^\ast(t)\rightarrow 0$ and $\dot
x^\ast(t) \rightarrow 0$ when $t\rightarrow a^+$. Let us denote this
orbit as $x=h(t)$, with $h(a)=\dot h(a)=0$ and satisfying
$$h(t)(C(t)h(t)+B(t)-\dot h(t))=-A(t).$$
As $C(a)h(a)+B(a)-\dot h(a)=B(a)>0$, then $C(t)h(t)+B(t)-\dot
h(t)>0$ for $t-a$ small enough; consequently, $h$ and $-A$ have the
same sign in a neighborhood of $(a,0)$, so $h(t)<0$ for $0<t-a<<1$.

\vspace{3mm}

Finally notice that, by Lemma \ref{lemados}, this $\mathcal{C}^1$
solution $x^\ast(t)$ is defined in $(a,b)$, which completes the
proof.
\end{proof}

\begin{lem} \label{lemaquatre} Let $a,b \in \mathbb{R}$ be such that $A(a)=A(b)=0$, with $A(t)>0$,
for all $t \in (a,b)$. Then, there exists a $\mathcal{C}^1$ solution
$x^\ast(t)$ of (\ref{normal}) in $(a,b)$, which has the sign of
$-A(t)$, and is such that
\begin{eqnarray*}
 & & x^\ast(t)\rightarrow 0 \quad \mbox{and} \quad \dot x^\ast(t) \rightarrow \frac{B(a)}{2}- \sqrt{\frac{B(a)^2}{4}+\dot A(a)} \quad \mbox{when} \quad t\rightarrow a^+, \\
 & & x^\ast(t)\rightarrow 0 \quad \mbox{and} \quad \dot x^\ast(t) \rightarrow \frac{B(b)}{2}- \sqrt{\frac{B(b)^2}{4}+\dot A(b)} \quad \mbox{when} \quad t\rightarrow b^-.
\end{eqnarray*}
\end{lem}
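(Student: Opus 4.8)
The plan is to reverse the strategy of Lemma \ref{lematres}: build the required solution near the \emph{right} endpoint $b$, propagate it across $(a,b)$, and then read off its behaviour as $t\to a^{+}$. As in Remark \ref{signeg} we may assume $A>0$ on $(a,b)$, so that ``of the sign of $-A(t)$'' means negative on $(a,b)$.

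First I would analyse $(b,0)$ as a critical point of the Li\'enard system (\ref{lat})--(\ref{lax}). Since $A(b)=0$ and $A>0$ on $(a,b)$ one has $\dot A(b)\le 0$, and (\ref{cond2}) gives $B(b)^2>-4\dot A(b)$, so the eigenvalues $\lambda^b_\pm=\frac{B(b)}{2}\pm\sqrt{\frac{B(b)^2}{4}+\dot A(b)}$ are real. If $\dot A(b)<0$ both are positive and $(b,0)$ is an unstable node; in the sector $\{t<b,\,x<0\}$ there is then a one-parameter family of orbits tending to $(b,0)$ as the Li\'enard time $s$ decreases to $-\infty$ (equivalently, as $t\to b^{-}$) tangentially to the slow eigendirection, of slope $\lambda^b_-$, together with one exceptional orbit — the strong unstable manifold — of slope $\lambda^b_+$. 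If $\dot A(b)=0$ the point is non-hyperbolic with eigenvalues $B(b)$ and $0$; by the Center Manifold Theorem, and arguing exactly as in the proof of Lemma \ref{lematres}(ii) with $b$ in place of $a$, there is a $\mathcal C^{1}$ centre-manifold orbit lying in $\{t<b,\,x<0\}$ with slope $0=\lambda^b_-$ at $(b,0)$. In either case, choose one such orbit — a non-exceptional one when $\dot A(b)<0$ — and let $x^{\ast}(t)$ be the corresponding negative $\mathcal C^{1}$ solution of (\ref{normal}); by construction it is defined on some $(b-\epsilon,b)$ and satisfies $x^{\ast}(t)\to 0$ and $\dot x^{\ast}(t)\to\lambda^b_-=\frac{B(b)}{2}-\sqrt{\frac{B(b)^2}{4}+\dot A(b)}$ as $t\to b^{-}$.

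Next I would extend $x^{\ast}$ to all of $(a,b)$: running the argument of Lemma \ref{lemados} in the direction of decreasing $t$ (using the evident analogues of Lemma \ref{lema1} on intervals of the form $(t_1,t_2]$) shows that any negative solution of (\ref{normal}) defined on $(t_1,t_2]$ with $t_2\le b$ extends to $(a,t_2]$, which applied to $x^{\ast}$ yields a negative $\mathcal C^{1}$ solution on $(a,b)$. Finally I would identify the behaviour at $a^{+}$ via the Li\'enard flow. By the $r_M$-trapping used in Lemma \ref{lemados}, $-M<x^{\ast}(t)<0$ on $(a,b)$, so the orbit $\gamma$ of (\ref{lat})--(\ref{lax}) associated with $x^{\ast}$ stays in the compact rectangle $[a,b]\times[-M,0]$ for all $s$; moreover $dt/ds=x^{\ast}<0$ along $\gamma$, so $t$ is strictly decreasing in $s$ with range $(a,b)$, whence $t\to a$ as $s\to +\infty$. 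Therefore the $\omega$-limit set of $\gamma$ is a nonempty invariant subset of $\{t=a\}\times[-M,0]$, and since the only orbit of (\ref{lat})--(\ref{lax}) contained in $\{t=a\}$ is the equilibrium $(a,0)$, we get $\omega(\gamma)=\{(a,0)\}$, i.e. $x^{\ast}(t)\to 0$ as $t\to a^{+}$. The slope follows: if $\dot A(a)>0$ then $\gamma$ must reach the saddle $(a,0)$ along its stable manifold, of slope $\lambda^a_-$, so from $\dot x^{\ast}=A(t)/x^{\ast}+B(t)+C(t)x^{\ast}$ and $(\lambda^a_-)^2-B(a)\lambda^a_--\dot A(a)=0$ one gets $\dot x^{\ast}(t)\to\lambda^a_-=\frac{B(a)}{2}-\sqrt{\frac{B(a)^2}{4}+\dot A(a)}$; if $\dot A(a)=0$ then $\gamma$ reaches $(a,0)$ along a centre manifold, which is $\mathcal C^{1}$ and tangent to the horizontal, so $\dot x^{\ast}(t)\to 0=\lambda^a_-$.

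I expect the delicate points to be, first, the absence of a unique invariant manifold at the node $(b,0)$ when $\dot A(b)<0$: one must ensure the orbit chosen there is not the exceptional strong-unstable one, so that its slope is $\lambda^b_-$ and not $\lambda^b_+$ — this, together with the subsequent extension, is exactly what forces the solution produced here (which coincides with the one of Lemma \ref{lematres}) to carry the slope $\lambda^b_-$ at $b^{-}$. Second, one must carefully justify the mirror version of Lemmas \ref{lema1} and \ref{lemados} and the elementary phase-plane facts invoked (that no orbit other than an equilibrium lies on $\{t=a\}$, the compactness underlying the $\omega$-limit argument, and the tangential nature of the approach to $(a,0)$ in both the hyperbolic and the non-hyperbolic cases).
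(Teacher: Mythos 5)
Your overall plan --- construct the orbit at one endpoint of $(a,b)$, transport it across the interval, and read off the tangency at the other endpoint --- is the right shape, but you have started from the wrong endpoint, and the reversal is not symmetric. The paper starts at $a$, where the relevant object is canonical (the stable manifold of the saddle when $\dot A(a)>0$, a center manifold when $\dot A(a)=0$), extends it \emph{forward} in $t$ by Lemma \ref{lemados}, and only then identifies the behaviour at $b^-$ by trapping the solution above the line $x=\alpha(t-b)$ and using the node structure at $(b,0)$. You instead start at $(b,0)$, where (for $\dot A(b)<0$) there is a full one-parameter family of admissible orbits, pick an arbitrary non-exceptional member, and invoke a ``mirror'' version of Lemma \ref{lemados} to carry it backward to $a$. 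That mirror lemma is false. The proof of Lemma \ref{lemados} is genuinely one-directional: the barriers $S>0$ on $x=-M$ and $S<0$ on $x=-N$ confine the solution only for \emph{increasing} $t$; for decreasing $t$ the same inequalities let it escape upward through $x=-N$ toward $0$. Concretely, for $x<0$ backward $t$-time is forward $s$-time in the Li\'enard system, $(b,0)$ is then a source and $(a,0)$ (when $\dot A(a)>0$) a saddle whose $s$-stable manifold meets $\{t>a,\ x<0\}$ in a \emph{single} orbit; any other orbit issuing from the node is deflected near the saddle along its unstable direction and crosses $x=0$ vertically at some $t^\ast\in(a,b)$ with $A(t^\ast)>0$, i.e.\ it ceases to be a negative graph over $t$ strictly before reaching $a$. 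Hence your chosen orbit generically does not extend to $(a,b)$, the confinement to the rectangle $[a,b]\times[-M,0]$ is not available, and the $\omega$-limit argument that follows has no premises to stand on.

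The gap is not cosmetic: making your route work would require proving that at least one non-exceptional orbit at the node actually connects to $(a,0)$, which is essentially the content of what the paper establishes by going the other way. The pieces of your write-up that are correct (the eigenvalue computation at $b$, the identification of the limiting slope from $\lambda^2-B\lambda-\dot A=0$, the remark that the only invariant subset of $\{t=a\}$ is the equilibrium) all sit downstream of the unjustified extension step. The fix is to adopt the paper's orientation: take the solution of Lemma \ref{lematres}, already defined on all of $(a,b)$ by Lemma \ref{lemados}, show $x^\ast(t)\to 0$ as $t\to b^-$ by comparison with $x=\alpha(t-b)$, $\alpha=\sqrt{-\min\dot A/(1+T\max|C|)}$, via Lemma \ref{lema1}, and then use $\lambda_-^b<\alpha<\lambda_+^b$ to force tangency to the slow eigendirection at the node.
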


\begin{proof} Let $x^\ast(t)$ be the solution of (\ref{normal}) featured in Lemma \ref{lematres}. Then, it remains to be proved the behavior for $t \rightarrow b^-$.

Firstly, for all $t \in (a,b)$, the Mean Value Theorem ensures that
there exists $\xi\in(t,b)$ such that $A(t)=\dot A(\xi)(t-b)$. Let
now $r_b$ be the straight line $r_b:=\{(t,x): \ x=\alpha(t-b)\}$,
where \[\alpha=\sqrt{\frac{-\min\dot A(t)}{1+T\max|C(t)|}}.\] Then,
for all $t\in(a,b)$ such that $(t,x)\in r_b$,
\begin{eqnarray*}S(t,x)&=&\frac{A(t)}{x}+B(t)+C(t)x\\ &\geq& \frac{\dot A(\xi)(t-b)}{\alpha(t-b)}+\min |B(t)|-\alpha T\max|C(t)|\\&\geq&
\alpha+\min |B(t)|-\alpha(1+T\max|C(t)|)+\frac{\min\dot
A(t)}{\alpha}\\&=&\alpha+\min |B(t)|-2\sqrt{-\min\dot
A(t)(1+T\max|C(t)|)}>\alpha,\end{eqnarray*} where in the last inequality we have used condition (\ref{cond2}). As, by Lemma
\ref{lematres}, $x^\ast(t)\rightarrow 0$ when $t\rightarrow a^+$,
Lemma \ref{lema1}.i guarantees that $x^\ast(t)>\alpha(t-b)$ for all
$t\in (a,b)$. But since $x^\ast(t)$ is negative in $(a,b)$, then
$\alpha(t-b)<x^\ast(t)<0$ and, taking limits for $t \rightarrow
b^-$, it is immediate that $x^\ast(t) \rightarrow 0$.

Secondly, consider the equivalent expression of (\ref{normal}) in
terms of the planar, autonomous system (\ref{lat})-(\ref{lax}).
Since $A(b)=0$ and $A(t)>0$ in $(a,b)$, it results that $\dot A(b)\leq
0$. Let us then split the study in two cases:

\vspace{3mm}

(i) If $\dot A(b)<0$, then $(t,x)=(b,0)$ is a hyperbolic critical
point of (\ref{lat})-(\ref{lax}). The eigenvalues are:
\[\lambda_{\pm}^b =  \frac{B(b)}{2}\pm \sqrt{\frac{B(b)^2}{4}+\dot A(b)}, \]
with $\lambda_{\pm}^b \in \mathbb{R}^+$ because of (\ref{cond2}), the
associated invariant subspaces of the linearized system being
\[ \mathbb{R}^2=\mathbb{E}_u^b = \mbox{span}\left\{\left(1,\lambda_-^b\right),\left(1,\lambda_+^b\right)\right\}.\]
 Hence, it is an unstable node and, in $t=b$, all the orbits are tangent to
one of the two eigenvectors that span $\mathbb{E}_u^b$. It is then
immediate that the solution $x^\ast(t)$ of (\ref{normal}), which is
known to satisfy  $x^\ast(t)>\alpha(t-b)$, matches one of these
orbits. Therefore, for $t$ close enough to $b$ it has to be $\dot
x^\ast(t)<\alpha$ and, as it can be easily proved that $\lambda_-^b
< \alpha < \lambda_+^b$, it results that
\[\dot x^\ast(t) \rightarrow \lambda_-^b=\frac{B(b)}{2}- \sqrt{\frac{B(b)^2}{4}+\dot A(b)} \quad \mbox{when} \ \ t \rightarrow b^-.\]

\vspace{2mm}

(ii) If $\dot A(b)=0$, the situation is equivalent to the case $\dot
A(a)=0$ discussed in the proof of Lemma \ref{lematres}. Namely,
$(t,x)=(b,0)$ is a non-hyperbolic critical point with eigenvalues
\[ \lambda_u^b=B(b), \quad \lambda_c^b=0,\]
the associated invariant subspaces of the linearized system being
\[\mathbb{E}_u^b=\mbox{span}\{ \left(1,B(b)\right)\}, \quad
\mathbb{E}_c^b=\mbox{span}\{ \left(1,0\right)\}.\] A
technique similar to the one followed in the preceding item shows
that $\dot x^\ast(t) \rightarrow 0$ when $t \rightarrow b^-$.
\end{proof}

Let us now proceed with the proof of Theorem \ref{teo}. As $A(t)$ has, at least, one zero in $[0,T]$ by hypothesis, let $t_0 \in \mathbb{R}$ be such that $A(t_0)=0$. Then, we define
\[Z:=\{t\in [t_0,t_0+T]: \ A(t)=0\}.\] Let $\mathcal P$ and
$\mathcal N$ denote the sets of maximal intervals in $[t_0,t_0+T]$ where $A(t)$ is
positive and negative, respectively. Let then $I_i=(a_i,b_{i})$,
$a_i,b_{i} \in Z$, denote an interval of $\mathcal P \cup \mathcal
N$. Lemma \ref{lemaquatre} and Remark \ref{signeg} ensure that, for
every $I_i$, there exists a $\mathcal{C}^1$ solution $x_i^\ast(t)$
of (\ref{normal}) on $I_i$, which has the sign of $A(t)$ in $I_i$, and
is such that $x_i^\ast(t) \rightarrow 0$ when $t \rightarrow a_i^+$
and also when $t \rightarrow b_{i}^-$. Hence, the function
constructed as
\begin{equation} \label{lasolu} x^\ast(t)=\left\{ \begin{array}{cll}
x_i^\ast(t) & \textrm{if} & t \in I_i \\ 0 & \textrm{if} & t \in Z,
\end{array} \right.\end{equation} is indeed a continuous solution
of (\ref{normal}) in $\mathbb{R}$ which is also $\mathcal{C}^1$ in
every open interval $I_i$. Let us finally prove that $x^\ast(t)$ is
$\mathcal{C}^1$ for all $t_i \in Z$. Three
different situations need to be considered:

\vspace{3mm}

(i) If $\dot A(t_i)>0$ then the graph of $x^\ast(t)$ in a neighborhood of
$t_i$ is the orbit of the (unique) stable manifold of
(\ref{lat})-(\ref{lax}), so $x^\ast(t)$ is $\mathcal{C}^1$ in $t_i$ (see
the discussion in the proof of Lemma \ref{lematres} for the case
$\dot A(a) > 0$).

\vspace{2mm}

(ii) If $\dot A(t_i)=0$, then the graph of $x^\ast(t)$ in a neighborhood
of $t_i$ is the orbit of a center manifold of
(\ref{lat})-(\ref{lax}), so $x^\ast(t)$ is $\mathcal{C}^1$ in $t_i$ (see
the discussion in the proof  of Lemma \ref{lematres} for the case
$\dot A(a) = 0$).

\vspace{2mm}

(iii) If $\dot A(t_i)<0$, then the graph of $x^\ast(t)$ in a neighborhood
of $t_i$ matches an orbit of (\ref{lat})-(\ref{lax}) with slope
\[ \dot x^\ast(t_i)= \frac{B(t_i)}{2}- \sqrt{\frac{B(t_i)^2}{4}+\dot A(t_i)}, \]
so $x^\ast(t)$ is $\mathcal{C}^1$ in $t_i$ (see the discussion in the proof of Lemma \ref{lemaquatre} for the case $\dot A(b) < 0$).

Finally, the $T$-periodic extension of $x^\ast$ is a $\mathcal C^1$ solution of (\ref{normal}) defined in $\mathbb R$, which completes the proof.
\qed

\begin{rem} Notice that if  $A(t)$ has degenerate zeros, then the construction of the solution $x^\ast(t)$ requires the use of the Center Manifold Theorem. This means that, in such a case, there may exist a family of periodic solutions of (\ref{normal}) with the same sign as $-A(t)B(t)$. \end{rem}

\section{Uniqueness and stability} \label{unies}

The next result reveals that the $T$-periodic solution(s) with non constant sign that arise from Theorem \ref{teo} are unique, in the sense that there do not exist other $T$-periodic solutions with nonconstant sign in (\ref{normal}) sharing only some of the zeros of $A(t)$.

\begin{thm} \label{p1} Let the assumptions of Theorem \ref{teo} be
fulfilled. Then, all $T$-periodic solutions of (\ref{normal}) with nonconstant sign have the sign of $-A(t)B(t)$. Moreover, if all the zeros of $A(t)$ are simple, then
(\ref{normal}) has a unique $\mathcal C^{1}$, $T$-periodic solution
with nonconstant sign.
\end{thm}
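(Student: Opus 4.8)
The plan is to deduce everything from the sign statement, which I would establish first. As in Section~\ref{prova}, assume $B(t)>0$ for all $t$ (replace $x$ by $-x$ if necessary), so that ``having the sign of $-A(t)B(t)$'' means ``having the sign of $-A(t)$''. Let $x$ be a $T$-periodic solution of (\ref{normal}) with nonconstant sign; evaluating (\ref{normal}) at a zero of $x$ shows at once that every zero of $x$ is a zero of $A$. Let $J=(p,q)$ be a maximal interval on which $x$ keeps a constant sign, say $x<0$ on $J$ (the case $x>0$ reduces to this one through the change $(t,x)\mapsto(-t,-x)$ of Remark~\ref{signeg}); its endpoints are zeros of $x$, hence of $A$. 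The crux is the claim that $A>0$ on all of $J$. Granting it, $A$ has no zero in the interior of any maximal constant-sign interval of $x$, so every zero of $A$ is an endpoint of such an interval and therefore a zero of $x$; thus $x$ vanishes exactly at the zeros of $A$ and has on each constant-sign interval the sign opposite to $A$, which is precisely the assertion that $x$ has the sign of $-A=-AB/B$.

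To prove the claim I would argue as in Lemmas~\ref{lematres} and \ref{lemaquatre}. Near $p$, the critical point $(p,0)$ of the Li\'enard system (\ref{lat})--(\ref{lax}) has eigenvalues with sum $B(p)>0$ and product $-\dot A(p)$: it is an unstable node if $\dot A(p)<0$, a hyperbolic saddle if $\dot A(p)>0$, and non-hyperbolic if $\dot A(p)=0$. An unstable node is incompatible with $x<0$ just to the right of $p$, for the graph of $x$ would then have to leave $(p,0)$ with positive slope; hence $\dot A(p)\ge 0$, and the graph of $x$ near $p$ must coincide with the branch contained in $\{t>p,\ x<0\}$ of the stable manifold (if $\dot A(p)>0$) or of a center manifold (if $\dot A(p)=0$) of $(p,0)$. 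In either case $x(t)\to0$ as $t\to p^+$ and $A(t)>0$ for $0<t-p\ll1$. Let $q'$ be the first zero of $A$ after $p$, so that $A>0$ on $(p,q')$ and $q'-p\le T$. By Lemma~\ref{lemados} the solution $x$ stays negative and is defined on $(p,q')$, and, reproducing the estimate in the proof of Lemma~\ref{lemaquatre}, condition (\ref{cond2}) yields $S(t,x)>\alpha$ along the line $x=\alpha(t-q')$, with $\alpha=\sqrt{-\min\dot A/(1+T\max|C|)}$. Since $x$ lies above that line near $p$, Lemma~\ref{lema1}.i forces $\alpha(t-q')<x(t)<0$ on $(p,q')$, so $x(t)\to0$ as $t\to q'^-$; hence $q'$ is a zero of $x$, so $q'=q$ and $A>0$ on all of $J$, as claimed. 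The non-hyperbolic subcases are handled by the center-manifold computation already carried out in Lemma~\ref{lematres}.

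For the uniqueness part, assume that all zeros of $A$ are simple and let $x_1,x_2$ be two $T$-periodic solutions of (\ref{normal}) with nonconstant sign; by the first part each vanishes exactly at the zeros of $A$ and has the sign of $-A$. As $A$ is $T$-periodic with a simple zero it changes sign, so it has at least one upcrossing $t_0$ (a zero with $\dot A(t_0)>0$); then $(t_0,0)$ is a hyperbolic saddle of (\ref{lat})--(\ref{lax}) with eigenvalues $\lambda_-^{t_0}<0<\lambda_+^{t_0}$ and a unique one-dimensional stable manifold. For $i=1,2$ the graph of $x_i$ passes through $(t_0,0)$ with $x_i>0$ just to the left and $x_i<0$ just to the right of $t_0$ (the sign of $-A$ there); among the orbits through the saddle $(t_0,0)$ this sign pattern is that of the stable manifold only (the unstable one has the opposite pattern), so both $x_1$ and $x_2$ coincide with it near $t_0$. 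Hence $x_1\equiv x_2$ on a one-sided neighbourhood of $t_0$ and, by uniqueness of solutions of $\dot x=S(t,x)$ in $\mathbb{R}\times\mathbb{R}^-$ and in $\mathbb{R}\times\mathbb{R}^+$, $x_1\equiv x_2$ on the whole negative interval to the right of $t_0$ and on the whole positive interval to its left, these extending up to the neighbouring zeros of $A$ by the first part together with Lemmas~\ref{lemados}--\ref{lemaquatre}. Repeating this at every upcrossing of $A$ and using that $x_1=x_2=0$ at every zero of $A$, one gets $x_1\equiv x_2$, the claimed uniqueness.

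I expect the first part to be the main obstacle — specifically, excluding that a $T$-periodic solution ``skips'' a zero of $A$, i.e.\ keeps the sign of $+A$ on some interval. The mechanism that prevents this is the interplay of two facts: the node/saddle dichotomy at the left endpoint of a constant-sign interval, which pins the solution onto the ``correct'' invariant manifold there so that it genuinely leaves $0$, and the quantitative barrier of Lemma~\ref{lemaquatre} which — thanks to (\ref{cond2}) — then drives it back to $0$ exactly at the next zero of $A$. In the uniqueness argument the only point requiring care is to anchor the comparison at the upcrossings of $A$, where $(t_0,0)$ is a saddle with a genuine unique stable manifold, rather than at the downcrossings, where it is an unstable node and no separatrix is singled out.
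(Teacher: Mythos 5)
Your proposal is correct and follows essentially the same route as the paper: the trichotomy on $\dot A$ at a zero bounding a constant-sign interval of the solution, exclusion of the unstable-node case, identification with the stable (or center) manifold of the Li\'enard system, the barrier estimate of Lemma \ref{lemaquatre} to return to zero at the next zero of $A$, and uniqueness of the stable manifold at simple zeros for the uniqueness claim. The only difference is organizational (you separate the sign statement from the identification with $x^\ast$ and anchor the comparison explicitly at upcrossings), not a different argument.
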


\begin{proof} Let $x(t)$ be a solution of (\ref{normal}) with nonconstant sign and non identically $0$ (otherwise the result is trivial). As stated in Section \ref{intro}, the zeros of  $x(t)$ are also zeros of $A(t)$. Then, let $I=(a,b)$ be an interval where $x(t)\neq 0$, for all $t \in I$, and such that $x(a)=x(b)=0$. It is no loss of generality to assume that $x(t)<0$ in $I$ (see Remark \ref{signeg}), which yields $\dot x(a)\leq 0$.
Let us consider three cases:

\vspace{2mm}

(i) If $\dot A(a)>0$, it has been noticed in the proof of Lemma \ref{lematres} that (\ref{lat})-(\ref{lax}) posesses two invariant manifolds in $(a,0)$, a stable one and an unstable one, with negative and positive slopes, respectively. As $x(t)$ has non positive derivative in $t=a$, it has to coincide with the solution curve corresponding to the stable manifold orbit and, consequently, with the periodic solution with nonconstant $x^\ast(t)$ guaranteed by Theorem \ref{teo}, from $t=a$ to the next zero of $A(t)$, and this zero has to be in $t=b$.

\vspace{2mm}

(ii) If $\dot A(a)=0$, it has been noticed in the proof of Lemma \ref{lematres} that (\ref{lat})-(\ref{lax}) possesses two invariant manifolds in $(a,0)$, a (non necessarily unique) center one and an unstable one, with null and positive slopes, respectively. As  $x(t)$ has non positive derivative in $t=a$, it has to coincide with one of the solution curves corresponding to the center manifold and, consequently, with the periodic solution with nonconstant $x^\ast(t)$ guaranteed by Theorem \ref{teo}, from $t=a$ to the next zero of $A(t)$, and this zero has to be in $t=b$.

\vspace{2mm}

(iii) If $\dot A(a) < 0$, it has been noticed in the proof of Lemma \ref{lematres} that (\ref{lat})-(\ref{lax}) possesses an
unstable node in $(a,0)$, and also that all the solutions tending to this point have positive slope. This is in contradiction with $\dot x(a)\leq 0$, which means that there cannot exist a solution of (\ref{normal}) verifying $\dot x(a) \le 0$ and $\dot A (a)<0$.

\vspace{2mm}

It is therefore proved that $x(t)=x^\ast(t)$, for all $t$ such that $x(t) \neq 0$. Furthermore, when $x(t)=0$, then $A(t)=0$ and $x^\ast(t)=0$, which implies that $x(t)=x^\ast(t)$, for all $t \in \mathbb{R}$.

Finally, when all the zeros of $A(t)$ are simple, the uniqueness of the Stable
Manifold (see the proof of Lemma \ref{lematres}) yields the existence of a unique $T$-periodic solution, $x^\ast(t)$, with nonconstant sign.
\end{proof}

The instability of the $T$-periodic solutions of (\ref{normal}) with nonconstant sign is claimed in next Theorem:

\begin{thm} \label{esta} Let the assumptions of Theorem \ref{teo} be
fulfilled. Then, any $\mathcal C^{1}$, $T$-periodic solution of
(\ref{normal}) with nonconstant sign is unstable.
\end{thm}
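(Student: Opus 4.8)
The plan is to exploit the boundedness of the periodic solution $x^\ast(t)$ together with the fact, established in Lemma \ref{lemaquatre} and in the proof of Theorem \ref{teo}, that at every simple zero $t_i$ of $A(t)$ with $\dot A(t_i)<0$ the solution $x^\ast$ arrives tangent to the eigendirection of slope $\lambda_-^{t_i}$, while the system $(\ref{lat})$--$(\ref{lax})$ has an \emph{unstable node} at $(t_i,0)$. Near such a point the Liénard system expands all nearby orbits, and the key geometric observation is that the solution curves of $(\ref{normal})$ lying \emph{above} the graph of $x^\ast$ (i.e.\ in $\overline{\Omega_{r}^+}$ for the appropriate tangent line $r$) are pushed away from $x^\ast$ as $t$ increases past $t_i$. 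Concretely, I would first fix a zero $t_i$ of $A$ with $\dot A(t_i)<0$ — such a zero exists because $A$ has nonconstant sign, so it has zeros where it changes from positive to negative, and for a $T$-periodic $\mathcal C^1$ function at least one of these is simple unless all zeros are degenerate; the degenerate case can be absorbed into the center-manifold situation, where instability still follows because the unstable eigenvalue $\lambda_u = B(t_i) > 0$ dominates one direction. I would then show that an initial condition $x(t_i) = \varepsilon > 0$ close to $0$ produces a solution which, by Lemma \ref{lema1}.i applied with the line $r_\varepsilon$ of slope $\lambda_+^{t_i}$ (or slope $B(t_i)$ in the degenerate case), stays in $\Omega_{r_\varepsilon}^+$ and is therefore uniformly bounded away from $x^\ast$ on a fixed subinterval to the right of $t_i$.

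The cleanest way to package this is via the variational (linearized) equation along $x^\ast$. Differentiating $(\ref{recalling})$ with respect to the initial condition, a perturbation $\xi(t)$ of $x^\ast(t)$ satisfies, to first order,
\begin{equation} \label{varesta}
\dot \xi = \left[ -\frac{A(t)}{x^\ast(t)^2} + C(t) \right] \xi .
\end{equation}
Hence the Floquet multiplier governing stability is
\begin{equation} \label{multesta}
\mu = \exp\left( \int_{t_0}^{t_0+T} \left[ -\frac{A(t)}{x^\ast(t)^2} + C(t) \right] dt \right),
\end{equation}
and instability amounts to showing $\mu > 1$, i.e.\ that the integral is strictly positive. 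Since $A(t)$ and $x^\ast(t)^2$ have opposite sign exactly when $x^\ast$ has the sign of $-A(t)B(t)$ with $B>0$ — that is, $A(t)/x^\ast(t)^2 \le 0$ wherever $A(t) \le 0$ — the term $-A(t)/x^\ast(t)^2$ is \emph{nonnegative on the intervals where $A<0$} and nonpositive where $A>0$; one must check the competition between the two kinds of intervals. The decisive input is the asymptotics near each zero: as $t \to t_i$ with $\dot A(t_i) \ne 0$ one has $x^\ast(t) \sim \lambda_-^{t_i}(t-t_i)$ and $A(t) \sim \dot A(t_i)(t-t_i)$, so $A(t)/x^\ast(t)^2 \sim \dot A(t_i)/\big[(\lambda_-^{t_i})^2 (t-t_i)\big]$, which is \emph{non-integrable} at $t_i$. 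On a positive-$A$ interval $(a_i,b_i)$ this would contribute $-\infty$ (good for instability at first sight but it means $\mu=\infty$, still $>1$), while the possible $+\infty$ contributions would come from approaching a zero from a negative-$A$ side — but there $\dot A$ has the opposite sign, so the sign of $A(t)/x^\ast(t)^2$ near the zero is again such that $-A/x^{\ast 2} \to +\infty$. Collecting the cases, every endpoint contributes $+\infty$ (or a finite positive amount in the degenerate case), so the integral in $(\ref{multesta})$ diverges to $+\infty$ and $\mu = +\infty > 1$, forcing instability.

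The main obstacle I anticipate is making the variational argument rigorous despite $x^\ast(t) \to 0$ at the zeros of $A$: the linearized equation $(\ref{varesta})$ has coefficients blowing up, so Floquet theory is not directly applicable, and one cannot simply "integrate the variational equation." I would therefore fall back on the direct comparison argument of Lemma \ref{lema1}: pick a zero $t_i$ with $\dot A(t_i)<0$; the solution $x^\ast$ approaches $(t_i,0)$ along the slope $\lambda_-^{t_i}$, and I claim that for any small $\delta>0$ the solution $x_\delta$ of $(\ref{normal})$ with $x_\delta(t_i + h) = x^\ast(t_i+h) + \delta$ for small $h>0$ satisfies, using Lemma \ref{lema1}.i with a straight line of slope slightly above $\lambda_-^{t_i}$ through $(t_i,0)$, that $x_\delta(t)$ lies a definite distance above $x^\ast(t)$ on a subinterval $[t_i+h, t_i + \eta]$ independent of $\delta$. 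Iterating over one period (and checking the solutions above $x^\ast$ remain defined, by a version of the argument in Lemma \ref{lemados} since they stay negative or stay in the region where $(\ref{normal})$ is regular) yields a uniform lower bound on how much such perturbations grow per period, hence instability in the Lyapunov sense. The bookkeeping of which side of $x^\ast$ the perturbed solution lies on across each zero, and the verification that perturbed solutions are not pushed into the singular set $x=0$, will be the technically delicate parts; the divergence computation in $(\ref{multesta})$ serves mainly as the heuristic guide confirming that the growth is in fact unbounded.
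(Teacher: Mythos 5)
Neither of your two routes is carried to completion, and each has a concrete defect. The Floquet computation is not only non-rigorous (as you acknowledge, the coefficient $-A(t)/x^\ast(t)^2+C(t)$ blows up at the zeros of $x^\ast$), it is also miscalculated as a heuristic: near a zero $t_i$ with $\dot A(t_i)>0$ approached from the right one has $-A(t)/x^\ast(t)^2\sim -\dot A(t_i)/\bigl[(\lambda_-^{t_i})^2(t-t_i)\bigr]\to-\infty$ non-integrably, while the approach from the left gives $+\infty$; the signs alternate with the side of approach, so the integral is a divergent $\infty-\infty$, not $+\infty$, and it does not even point in the right direction. The fallback comparison argument has two gaps. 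First, Lyapunov instability requires the initial time to be fixed while $\delta\to0$; but with $h>0$ fixed, the starting point $\bigl(t_i+h,\,x^\ast(t_i+h)+\delta\bigr)$ eventually lies \emph{below} any line through $(t_i,0)$ of slope $m>\lambda_-^{t_i}$, since $x^\ast(t_i+h)\approx\lambda_-^{t_i}h<mh$; so Lemma \ref{lema1}.i does not apply as you invoke it. Second, the existence of a zero with $\dot A(t_i)<0$ is not guaranteed: a $\mathcal C^1$, $T$-periodic $A$ can change sign only at degenerate zeros, and your treatment of that case ("absorbed into the center-manifold situation") is a hand-wave. The "iteration over one period," whose bookkeeping you leave open, is moreover unnecessary for instability.

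The paper's proof shows how little is actually needed, and it is in the same family as your fallback (a Lemma \ref{lema1} comparison) but far simpler. Take a maximal interval $I=(a,b)$ with $A>0$ there (after the reduction $B>0$ and Remark \ref{signeg}); by Theorems \ref{teo} and \ref{p1}, $x^\ast<0$ on $I$ and $x^\ast(a)=0$. Perturb the initial value at the \emph{fixed} time $t=a$ to a small $x_a>0$ and compare with the \emph{horizontal} line $x=x_a$: on $[a,b)$ one has $S(t,x_a)=A(t)/x_a+B(t)+C(t)x_a>0$ for all sufficiently small $x_a$, because $A\ge0$ on $[a,b]$ and $B$ is bounded below by a positive constant. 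Lemma \ref{lema1}.i then keeps the perturbed solution above $x_a>0$ throughout $I$, while $x^\ast(t)<0$ there, so the two solutions separate by at least $\max_I|x^\ast|$, a quantity independent of $x_a$. This gives instability with no case analysis on $\dot A$, no tangency asymptotics, and no continuation past the zeros of $A$. You should replace your construction by this one, or at minimum repair the two gaps above (perturb exactly at the zero rather than at $t_i+h$, and handle the purely degenerate case honestly).
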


\begin{proof} Assume that $B(t)>0$, for all $t$, which is no loss of generality. Let $I=(a,b) \in \mathcal{P}$,
i.e. an interval such that $A(t)>0$, for all $t$ (in case that $A(t) \le 0$, for all $t$, use the change of variables $(t,x) \mapsto (-t,-x)$ suggested in Remark \ref{signeg}). Theorems \ref{teo} and \ref{p1} ensure that any $T$-periodic solution of (\ref{normal}) with nonconstant sign has the sign of $-A(t)$. Hence, let $x^\ast(t)$ denote one of these solutions, which satisfies $x^\ast(t)<0$, for all $t \in I$.

Let $x_a \in \mathbb{R}^+$ and consider the straight line
$r_{x_a}:=\left\{(t,x): \ x-x_a =0\right\}$. It is rather
immediate that there exists an open interval $ J \subseteq
\mathbb{R}^+$ such that, for all $x_a \in J$,
\[ S(t,x)=\frac{A(t)}{x_a}+B(t)+C(t)x_a>0, \quad \forall (t,x) \in
[a,b) \times \mathbb{R}^+ \cap r_{x_a}.\] Therefore, by Lemma
\ref{lema1}.i, any solution $x(t,x_a)$ of (\ref{normal}) with
$x(a)=x_a \in J$ is strictly positive for all $t \in I$. As a
consequence, $|x(t,x_a)-x^\ast(t)|>|x^\ast(t)|$ in $I$, which
yields the instability of $x^\ast(t)$.
\end{proof}

\begin{rem} \label{geom} When $t=a$ is a simple zero of $A(t)$, the behavior of the solutions brought out in
Theorem \ref{esta} arises
immediately from the fact that, in such a case, $(a,0)$ is a
saddle point of the corresponding two-dimensional system
(\ref{lat})-(\ref{lax}).
\end{rem}

\section{An example case: the normal form} \label{sec_normalform}

The normal form of Abel equations of the second kind is of the form
\begin{equation} \label{nf}
  x\dot x = A(t)+x.
\end{equation}
This type of ODE is specially important because the generic Abel
equation of the second kind (\ref{normal}) is readily transformable
to (\ref{nf}) with a change of variables that, however, does not
preserve time \cite{p}.

\begin{prop} \label{tnf} Let $A(t)$ be a $\mathcal C^1$, $T$-periodic function. If $A(t)$ has at least
one zero in $[0,T]$ and
\begin{equation} \label{cond22} \min\dot A(t) > -\frac14,
\end{equation}
then $(\ref{nf})$  has, at least, one $T$-periodic solution, $x^\ast(t)$, that has the sign
of $-A(t)$, and it is also $\mathcal C^{1}$ and unstable.
Furthermore, if all the zeros of $A(t)$ are simple, then $x^\ast(t)$ is the unique $T$-periodic solution of (\ref{nf}) with nonconstant sign. Additionally,
if
\begin{equation}\label{nose} \int_0^T A(t)dt=0,\end{equation}
then (\ref{nf}) has no other periodic solutions but $x^\ast(t)$.
\end{prop}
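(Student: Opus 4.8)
The plan is to read off everything in the statement except the last conclusion directly from Theorems~\ref{teo}, \ref{p1} and \ref{esta}, and then to settle the statement made under hypothesis (\ref{nose}) by an elementary integral identity. First I would observe that (\ref{nf}) is precisely (\ref{normal}) with $B(t)\equiv 1$ and $C(t)\equiv 0$; for these coefficients one has $\min|B(t)|^2=1$ and $\max|C(t)|=0$, so condition (\ref{cond2}) reads $1>-4\min\dot A(t)$, which is exactly (\ref{cond22}). Hence the hypotheses of Theorems~\ref{teo}, \ref{p1} and \ref{esta} are all met, and those results immediately provide a $\mathcal C^{1}$, $T$-periodic solution $x^\ast(t)$ with the sign of $-A(t)B(t)=-A(t)$, its instability, and --- when all the zeros of $A(t)$ are simple --- the fact that $x^\ast(t)$ is the only $T$-periodic solution of (\ref{nf}) with nonconstant sign.

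It then remains, under the extra assumption (\ref{nose}), to rule out $T$-periodic solutions of constant sign, since Theorem~\ref{p1} already forces every $T$-periodic solution with nonconstant sign to have the sign of $-A(t)$ --- hence to coincide with $x^\ast(t)$ when the zeros of $A$ are simple, and in any case to share exactly the zeros of $A(t)$. So I would take a $T$-periodic solution $x(t)$ with $x(t)\neq 0$ for all $t$; being nowhere zero it is $\mathcal C^{1}$ and satisfies $\dot x=A(t)/x+1$. Integrating the identity $x\dot x=A(t)+x$ over $[0,T]$, and using that $\int_0^T x\dot x\,dt=\frac12\bigl(x(T)^2-x(0)^2\bigr)=0$ by $T$-periodicity together with $\int_0^T A(t)\,dt=0$ from (\ref{nose}), gives $\int_0^T x(t)\,dt=0$, which contradicts $x(t)$ having constant sign on $[0,T]$ (the cases $x>0$ and $x<0$ being identical). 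Thus no constant-sign $T$-periodic solution exists.

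Finally I would assemble the pieces: every $T$-periodic solution of (\ref{nf}) must vanish at some point, for otherwise it would have constant sign, which was just excluded; its zero set is then contained in that of $A(t)$, so it has nonconstant sign and is covered by the previous paragraph. I do not expect a serious obstacle: the only genuinely new points are the observation that (\ref{cond2}) collapses to (\ref{cond22}) here and the one-line integration. The one mildly delicate issue in the write-up is the degenerate-zero case, in which Theorem~\ref{teo} only produces a possibly non-unique family of nonconstant-sign solutions; there the last sentence of the Proposition should be read as the assertion that every periodic solution of (\ref{nf}) belongs to that family, i.e.\ that no periodic solution of (\ref{nf}) avoids the zeros of $A(t)$.
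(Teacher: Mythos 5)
Your proposal is correct, and the first two thirds coincide with the paper: both reduce (\ref{cond2}) to (\ref{cond22}) by setting $B\equiv 1$, $C\equiv 0$ and then invoke Theorems~\ref{teo}, \ref{p1} and \ref{esta}. Where you genuinely diverge is in excluding constant-sign (i.e.\ nowhere-vanishing) $T$-periodic solutions under (\ref{nose}): the paper passes to the first-kind equation via $x\mapsto x^{-1}$ (resp.\ $x\mapsto -x^{-1}$) and cites Theorem~2.1 of Bravo--Torregrosa \cite{bt} to rule out positive (resp.\ negative) periodic solutions, whereas you simply integrate $x\dot x=A(t)+x$ over a period, use $\int_0^T x\dot x\,dt=\tfrac12\bigl(x(T)^2-x(0)^2\bigr)=0$ and (\ref{nose}) to conclude $\int_0^T x\,dt=0$, which is incompatible with constant sign. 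Your argument is more elementary and self-contained --- it avoids the external citation entirely and exploits the special structure $B\equiv 1$, $C\equiv 0$ of the normal form, for which the mean of $x$ over a period is forced to vanish; the paper's route, by contrast, would generalize more readily to equations where the linear-in-$x$ coefficient is non-constant. Your closing remarks are also sound: with the paper's convention that ``constant sign'' means nowhere zero, any remaining periodic solution must vanish somewhere, hence (its zeros being zeros of $A$) falls under Theorem~\ref{p1}, and in the degenerate-zero case the uniqueness assertion is indeed to be read as membership in the family produced by the center manifold construction.
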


\begin{proof} The first part is straightforward from Theorems \ref{teo},
\ref{p1} and \ref{esta}.

The last part of the statement follows if one can ensure that, when (\ref{nose}) is assumed, (\ref{nf}) has no $T$-periodic solutions with constant sign. For, the change of variables $x \mapsto x^{-1}$ and Theorem 2.1 in \cite{bt} guarantee the nonexistence of positive periodic solutions in (\ref{nf}). An equivalent conclusion for negative periodic solutions follows using $x \mapsto -x^{-1}$.
\end{proof}


\begin{rem} It is worth mentioning that restriction (\ref{cond22}) is sharp.
For, recall from Section \ref{prova} that $t_i$ denotes an element
of $Z$, the set of  time instants where $A(t)$ vanishes. Then,
notice that if $\dot A (t_i) < - 1/4$, the phase plane point
$(t_i,0)$ becomes a focus of (\ref{lat})-(\ref{lax}), which implies
that there can not exist any solution $x(t)$ of (\ref{nf}) such that
$x(t_i)=0$.
\end{rem}





\bibliographystyle{elsarticle-num}



\end{document}